\documentclass{article}
\usepackage{amsthm}
\usepackage{amssymb}
\usepackage{amsmath}
\usepackage{bbold}
\usepackage{dsfont}
\usepackage{mathrsfs}
\usepackage{xspace}
\usepackage{enumerate}
\usepackage[pdftex]{graphicx}
\usepackage{subcaption}
\usepackage[format = hang]{caption}
\usepackage{subcaption}
\usepackage{array}
\usepackage{thmtools, thm-restate}
\usepackage[section]{placeins}
\usepackage{adjustbox}
\usepackage{placeins}

\newcommand{\nats}{\ensuremath{\mathds{N}}\xspace}
\newcommand{\natsz}{\ensuremath{\mathds{N}_0}\xspace}

\newcommand{\CM}{\texttt{Cle\-ver\-Ma\-ker}\xspace}
\newcommand{\RM}{\texttt{Ran\-dom\-Ma\-ker}\xspace}
\newcommand{\CB}{\texttt{Cle\-ver\-Br\-eaker}\xspace}
\newcommand{\RB}{\texttt{Ran\-dom\-Br\-eaker}\xspace}
\newcommand{\CP}{\texttt{CP}\xspace}
\newcommand{\RP}{\texttt{RP}\xspace}

\newcommand{\bigchoose}[2]{\Bigl({{#1}\atop#2}\Bigr)}

\newcommand{\prob}[1]{ Pr \left[#1\right]}

\newcommand{\Gsigma}[2]{ G_{#1}\left(#2\right)}

\declaretheorem[numberwithin=section, name=Theorem]{thm}
\declaretheorem[sibling=thm, name=Lemma]{lem}

\declaretheorem[sibling=thm, name=Proposition]{prop}

\declaretheorem[sibling=thm, style=definition, name=Definition]{defin}

\title{Sharp thresholds for half-random games II}
\date{}
\author{Jonas Groschwitz\\ Universit\"at Potsdam  \and Tibor Szab\'o \\ Freie Universit\"at Berlin}

\begin{document}
	
	\maketitle
	
	\begin{abstract}
		We study biased Maker-Breaker positional games between two players, one of whom is playing 
		randomly against an opponent with an optimal strategy. In this work we focus on the case of Breaker playing randomly and Maker being ``clever''. The reverse scenario is treated 
		in a separate paper. 
		We determine the sharp threshold bias of classical games played on the edge set of the complete graph $K_n$, 
		such as connectivity, perfect matching, Hamiltonicity, and minimum degree-$1$. 
		In all of these games, the threshold is equal to the trivial upper bound implied by the 
		number of edges needed for Maker to occupy a winning set. Moreover, 
		we show that \CM can not only win against asymptotically optimal bias, but can do so very fast, wasting only logarithmically many moves (while the winning set sizes are linear in $n$).
	\end{abstract}
	
	\section{Introduction}
	
	In a Maker-Breaker positional game, two players take turns occupying a free element of a vertex set $X$, called \textit{the board}. The game is defined by a finite hypergraph 
	$\mathcal{F}\subset2^X$. One player, called Maker, is called the winner if he occupies all vertices of a hyperedge in $\mathcal{F}$. Otherwise the other player, called Breaker, wins. 
	We focus on graph games, where the board $X$ is the edge set of a complete graph $K_n$, and $\mathcal{F}$ consists of all subgraphs with a certain graph property. Here we focus on the game hypergraphs ${\cal C}(n)$, ${\cal PM}(n)$, ${\cal H}(n)$, ${\cal D}_1(n)$, ${\cal D}_2(n)$ denoting the edge sets of $n$-vertex graphs that are connected, have a perfect matching, have a hamilton cycle, have minimum degree at least one and two, respectively. Mostly, $n$ will be clear from the context and we omit it from the notation.
	
	Since the game has perfect information and no chance elements, one player has a winning strategy -- which of the two players, depends on the game. A standard method, suggested 
	by Chv\'{a}tal and Erd\H{o}s~\cite{chvaterd78},
	 to compensate this imbalance inherent to the game, is to introduce a \textit{bias}, that is, to allow the ``disadvantaged'' player to occupy more than one element per turn. In an \textit{$(a : b)$ biased positional game} Maker occupies $a$ elements per turn and Breaker $b$ elements.

	\subsection{Threshold Bias and Half-Random Games}
	In all the mentioned graph games, Maker wins rather easily with a $(1: 1)$ bias. But what bias is necessary to allow Breaker to win? Given a game $\mathcal{F}$ we define the \textit{threshold bias} $b_\mathcal{F}$ to be the smallest integer $b$ such that Breaker has a winning strategy in the $(1:b)$ biased game $\mathcal{F}$.  The threshold bias of the 
	connectivity game was already studied by Chv\'{a}tal and Erd\H{o}s~\cite{chvaterd78} in 1978, who determined it to be $b_{\cal C} = \Theta\left(\frac{n}{\ln n}\right)$. They also showed that $b_{\cal H} > 1$. 
	Subsequently the lower bounds on the threshold bias of the connectivity game, 
	as well as the one of the Hamiltonicity game was improved in a series of papers by
	Beck and several other researchers.
	Gebauer and Szab\'o \cite{gebsza09} and Krivelevich \cite{kriv09}, respectively, 
	showed that both threshold are $(1+o(1)) \frac{n}{\ln n}$.
	
	An instrumental way, suggested implicitly by Chv\'{a}tal and Erd\H{o}s~\cite{chvaterd78}, 
	to gain insight into particular positional games is to study what happens when both
	players occupy a uniformly random free edge. Interestingly, as 
	an immediate consequence of classic theorems from the theory of random graphs, 
	these random connectivity 
	and Hamiltonicity games exhibit the very same threshold asymptotics as their deterministic counterparts. This phenomenon is called the \textit{probabilistic intuition} and is a driving force behind much of the research in positional games. 
	For a more detailed discussion of the relevant history of biased graph games and the 
	probabilistic intuition, see the first part of our work \cite{THRG1}.
	
	A natural problem arising from the desire for better understanding of the probabilistic
	intuition is to examine the games with exactly one player playing randomly and the other following a (clever) strategy. There are of course two possible scenarios for these 
	\textit{half-random positional game}s: either Maker or Breaker is the one who plays randomly.	 In this paper we focus on the latter; our work on the first scenario is contained 
	in \cite{THRG1}. To signify their strategies, we call our players \RB and  \CM. 
	We show that playing randomly puts \RB at a serious disadvantage against her clever opponent: the threshold bias of the game is much higher than the $\frac{n}{\ln n}$ of  
	the purely clever and purely random game. 
	
Another aspect of positional games we emphasize in this paper is the efficiency of Maker's winning strategy. The question of winning fast has recently been the subject of vigorous research. Among others, Hefetz et al. \cite{hefetz2009fast} and later 
Hefetz and Stich~\cite{HefetzStich} established optimally fast Maker strategies for unbiased non-random Maker-Breaker games, in particular Hamiltonicity and perfect matching. 
Ferber and Hefetz~\cite{FerberHefetzI, FerberHefetzII} 
used fast winning strategies to obtain positive results for certain
 strong positional games. Strong positional games are in a sense the symmetric version 
 of Maker-Breaker games, often proving to be quite inaccessible by standard methods.
For Avoider-Enforcer games, the optimal speed of strategies was studied by Hefetz et al. \cite{hefetz2009fastAvoiderEnforcer} and Bar{\'a}t and Stojakovic \cite{barat2010winning},
among others.

		In order to state our results we define the precise notion of sharp threshold bias of a 
	\CM/\RB half-random games. In what follows, when we talk about a game, we actually mean a \emph{sequence of games} parametrized by $n$, and similarly by strategy we mean a \emph{sequence of strategies}.
	\begin{defin}
		We say a function $k:\natsz\mapsto\natsz$ is a \textit{sharp threshold bias} of the $(1:b)$
		half-random positional game between \CM and \RB, if for every $\epsilon >0$ the following two conditions are satisfied:
		\begin{enumerate}[(a)]
			\item \RB wins the $(1:(1+\epsilon) k(n))$-biased game with probability tending 
			to  $1$ against any strategy of \CM, and 
			\item \CM has a strategy against which \RB loses the $(1:(1-\epsilon)k(n))$-biased 
			game with probability tending to $1$.
		\end{enumerate}
	\end{defin}
	
	\subsection{Results}
	We establish that both the perfect matching and the Hamiltonicity game  
	have a sharp threshold bias. In both cases the sharp threshold turns out to match
	the trivial upper bound derived from the observation that a large \RB
	bias makes it impossible for \CM to occupy as many edges throughout
	the game as there are in just a single winning structure.

	If the bias of \RB is at least $n$ then \CM occupies at most 
	${n\choose 2}/(n+1) <\frac{n}{2}$ edges and hence can occupy neither a 
	perfect matching nor a graph with minimum degree $1$.
	In our first theorem we show that this trivial upper bound on the threshold biases of
	the games ${\cal PM}$ and  ${\cal D}_1$ is essentially tight. We achieve this by providing 
	\CM with a strategy that occupies a perfect matching very fast, in just $O(\log n)$ more rounds than the absolute necessary $\frac{n}{2}$.
	\begin{restatable}{thm}{rstConnCMRB}\label{thm:resPmCMRB}
		For every $\epsilon >0$, 
		\CM has a strategy in the $(1: (1-\epsilon)n))$ half-random game ${\cal PM}$
		that is winning in $\frac{n}{2} + O(\log n)$ moves a.a.s.
		In particular the sharp threshold bias for both the $(1:b)$ 
		perfect matching, and the $(1:b)$ minimum degree-$1$ half-random
		game between \CM and \RB is $n$.
	\end{restatable}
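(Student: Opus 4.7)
The upper bound on the threshold bias is immediate from the counting argument already noted in the paragraph preceding the statement: under bias $n$, \CM finishes with fewer than $n/2$ edges and hence can cover neither a perfect matching nor all vertices, so a.a.s.\ \RB wins both games. Moreover a perfect matching is a subgraph of minimum degree one, so the sharp-threshold statement for $\mathcal{D}_1$ follows directly from the one for $\mathcal{PM}$. The real content of the theorem is therefore the lower bound for the perfect matching game, together with the fast $n/2+O(\log n)$ winning speed.

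To establish this, fix a large constant $C = C(\epsilon)$, put $K := \lceil C \log n \rceil$, and consider the following two-phase strategy for \CM. In \emph{Phase 1}, for each $t = 1,\dots, T := n/2 - K$, \CM plays an arbitrary free edge whose endpoints both lie in the current unmatched set $U_t$, thereby extending his matching by one per round. In \emph{Phase 2} (rounds $T+1, \dots, T+K$), \CM identifies a perfect matching $M^\ast$ of $U_{T+1}$ inside the free-edge graph on $U_{T+1}$ and claims its $K$ edges one by one. If both phases succeed, \CM owns a perfect matching after exactly $T+K = n/2$ moves, comfortably within the claimed $n/2 + O(\log n)$ bound.

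The correctness of the strategy rests on three probabilistic claims: (i) for every round $t \leq T$ the set $\binom{U_t}{2}$ contains a free edge; (ii) at the start of Phase 2 the free-edge graph on $U_{T+1}$ contains a perfect matching; and (iii) during the $K$ rounds of Phase 2 no edge of $M^\ast$ is claimed by \RB. For (i), since \RB has played roughly $(1-\epsilon)n(t-1)$ approximately uniform random edges, the set $\binom{U_t}{2}$ contains at most $(1-\epsilon/2)\binom{|U_t|}{2}$ \RB-edges, leaving at least $(\epsilon/2)\binom{|U_t|}{2} = \Omega(\log^2 n)$ free edges, since $|U_t| \geq 2K$ throughout Phase 1. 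For (ii), the free-edge graph on $U_{T+1}$ is distributionally close to $G(2K, \epsilon)$; with $2K = 2C\log n$ and $\epsilon$ constant, the density is far above the classical perfect-matching threshold, so standard Hall/Tutte-type expansion arguments supply the required perfect matching a.a.s. Claim (iii) follows by Markov from the expected value $K \cdot (1-\epsilon) n \cdot \binom{2K}{2}/\bigl(\epsilon \binom{n}{2}\bigr) = O(K^3/n) = o(1)$ for the number of \RB moves in Phase 2 that land inside $\binom{U_{T+1}}{2}$.

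The principal technical obstacle appears in claims (i) and (ii): both require concentration of the count of \RB-claimed edges inside $\binom{U}{2}$ that is \emph{uniform} over all plausible unmatched sets $U$. \RB's moves are neither independent nor identically distributed, as she samples without replacement from a free-edge pool whose contents depend on the entire game history, so a direct Chernoff bound is not available. My plan is to expose \RB's moves one round at a time and apply an Azuma/Doob martingale bound to the count of \RB-edges inside each fixed candidate set $U$, obtaining a failure probability $\exp(-\Omega(\epsilon^2 |U|^2))$ per $U$. This survives the union bound $\binom{n}{|U|} \leq \exp(O(|U| \log n))$ over all candidate $U$ provided $|U| \geq C \log n$ with $C$ chosen sufficiently large in terms of $\epsilon$, which is precisely why $K = \lceil C \log n \rceil$ is the right scale in the strategy.
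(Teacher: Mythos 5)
Your overall strategy fails at claim (ii), and the gap is not a technicality but a missing central idea.

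The issue is that knowing the free-edge graph on $U_{T+1}$ has roughly $\epsilon\binom{2K}{2}$ edges (which is what your martingale concentration delivers) does not give a perfect matching: a single vertex $v\in U_{T+1}$ with all of its $2K-1$ edges inside $U_{T+1}$ claimed by \RB already kills it. That event has probability roughly $(1-\epsilon/2)^{2K-1} = \exp(-\Theta(\epsilon K))$ for a fixed candidate set $U$, \emph{not} $\exp(-\Theta(\epsilon^2K^2))$; the quadratic rate you cite applies to global edge-count deviations, not to Hall obstructions of bounded size. When you then union-bound over the $\binom{n}{2K}\approx\exp(2K\log n)$ candidate sets $U$ (which is needed, since $U_{T+1}$ is adaptively determined by the game history), the bound $\exp(2K\log n)\cdot\exp(-\Theta(\epsilon K))$ diverges for every choice of $K$. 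So no choice of the constant $C$ rescues the argument: the isolated-vertex obstruction is simply too likely relative to the number of candidate sets. Your Phase 2 is doomed because it confines itself to edges inside $U_{T+1}$, and there is no reason for \RB's graph restricted to a clever adversarially selected $\Theta(\log n)$-set to have minimum co-degree at least $1$.

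The paper circumvents exactly this obstacle by never attempting to complete the matching inside the small unmatched set. When the unmatched set shrinks to size $\Theta(\log n)$, \CM switches to \emph{augmenting paths of length $3$} that route through an already matched pair $uv\in M$: he looks for an edge $uv\in M$ so that some isolated $a$ sees $u$ and some isolated $b$ sees $v$, then uses a double move to claim $au$ and $bv$ and replace $uv$. This works because the relevant second side of the bipartite incidence is $X_a^+$, which always has $\Omega(n)$ vertices (by the maximum-degree bound on \RB's graph, Property~$(i)$ of Lemma~\ref{lemGnm}), so that the nonexistence of a vacant edge would force a $K_{\Omega(n),l}$ in \RB's graph, ruled out by Property~$(iii)$. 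When even length-$3$ augmenting paths run dry (constantly many isolated vertices left), a second escalation to length-$5$ paths and triple moves closes the gap via Property~$(iv)$. The corresponding near-clique event on $k = \Theta(\log n)$ vertices (Property~$(ii)$), used for Stage~1, \emph{does} have failure probability $\exp(-\Omega(k^2))$ per set precisely because it asks \RB to occupy $\binom{k}{2}-k/2$ edges, not merely a star; that is why the union bound succeeds there but would fail for the matching event you need.

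In short, the missing idea is augmenting paths through the matched part of the graph. Without them the leftover subgraph is too small (and too adaptively chosen) for a uniformly valid Hall argument, which is the crux of the problem.
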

	
	For our next theorem observe that if the bias of \RB is more than $\frac{n}{2}$ 
	then \CM occupies less than ${n\choose 2}/(n/2) =n-1$ edges and hence can 
	build neither a connected graph nor a Hamilton cycle nor a graph with minimum degree $2$. 
	It turns out that this trivial upper bound on the 
	threshold biases is essentially tight for all three games. 
	Again, we prove this by providing \CM with a very fast strategy to build a Hamiltonian cycle, succeeding in just $n+O(\log n)$ moves.
	\begin{restatable}{thm}{rstHamCMRB}\label{thm:resHamCMRB}
		For every $\epsilon >0$, 
		\CM has a strategy in the $(1: (1-\epsilon)\frac{n}{2}))$ half-random game ${\cal H}$
		that is winning in $n + O(\log n)$ moves a.a.s.
		In particular the sharp threshold bias for the $(1:b)$ 
		connectivity, minimum-degree-$2$, and  Hamiltonicity half-random
		games between \CM and \RB is $\frac{n}{2}$.
	\end{restatable}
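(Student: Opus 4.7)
The trivial upper bound is immediate: if \RB plays with bias $b\ge n/2$, then \CM claims at most $\binom{n}{2}/(b+1)\le n-2$ edges in total and cannot form a spanning tree, a Hamilton cycle, or a graph of minimum degree~$2$. Since a Hamilton cycle witnesses both connectivity and minimum degree~$2$, everything reduces to proving the Hamilton-cycle statement: for every fixed $\epsilon>0$, \CM wins ${\cal H}$ against bias $(1-\epsilon)n/2$ in $n+O(\log n)$ moves a.a.s.

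The plan is a Pósa-style path-building-and-rotation strategy adapted to the random breaker. \CM maintains a self-avoiding path $P=v_1 v_2\cdots v_k$ in her own edge set, and in every round plays one of: \textbf{(E) extension}, a free edge $v_k u$ with $u\notin V(P)$, appending $u$ to $P$; \textbf{(R) rotation}, when no extension exists, a free edge $v_k v_i$ with $v_i\in V(P)\setminus\{v_{k-1}\}$, replacing $P$ by $v_1\cdots v_i v_k v_{k-1}\cdots v_{i+1}$; or \textbf{(C) closing}, once $V(P)=V$, a free edge joining the two endpoints. Each move adds exactly one \CM-edge, so at the end Maker owns the $n$ cycle edges plus one extra edge per rotation. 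The theorem therefore reduces to bounding the total number of rotations by $O(\log n)$ a.a.s.

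A concentration argument controls the bulk of the play. After the planned $m\le n+O(\log n)$ \CM-moves, \RB has claimed a uniformly random subset of $mb=(1-\epsilon+o(1))\binom{n}{2}$ edges of the board. A hypergeometric tail bound, union-bounded over vertices and rounds, gives that with probability $1-o(1)$ every vertex has $(1-\epsilon)m'\pm O(\sqrt{n\log n})$ \RB-edges after every round $m'\le m$. In particular, for any $U\subseteq V$ with $|U|\ge C\log n/\epsilon$ and any vertex $v$, at least $\epsilon|U|/3$ of the edges from $v$ into $U$ are free. Setting $U=V\setminus V(P)$ yields that as long as $|V\setminus V(P)|\ge C\log n/\epsilon$, an extension is always available, so the first $n-O(\log n)$ \CM-moves are pure extensions requiring no rotations.

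The remaining $L=O(\log n)$ vertices and the closing move absorb all the logarithmic overhead, and this endgame is the main obstacle. Here the plan is to invoke the classical Pósa booster: from the current endpoint $v_k$, each rotation enlarges the reachable set of alternative endpoints by a constant multiplicative factor, provided the underlying free graph expands — which is exactly what the concentration statement supplies. After $t=O(\log n)$ rotations the endpoint pool has linear size, so some alternative endpoint almost surely has a free edge into $V\setminus V(P)$, producing a genuine extension; a standard amortisation across the exponential growth bounds the total number of rotations used for the last $L$ vertices by $O(\log n)$. A symmetric double-rotation argument at both endpoints produces the closing edge in $O(\log n)$ further moves. The main technical obstacle, where I expect most of the work to be done, is to make the concentration bounds hold \emph{uniformly over the adaptive game history}: since \CM's extension and rotation choices depend on which edges are still free, the remaining free graph at each round is only conditionally uniform, and propagating the vertex-wise bound along the game tree requires either an exposure/martingale argument or the verification that \CM's strategy is sufficiently oblivious to couple with the plain random edge process.
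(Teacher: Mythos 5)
Your reduction to the Hamilton-cycle case and the trivial upper bound are fine, but the core of the proposal has genuine gaps, and the route through a single Pósa path is structurally different from the paper's and (as sketched) doesn't close.

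The central problem is the claimed concentration statement: ``for any $U\subseteq V$ with $|U|\ge C\log n/\epsilon$ and any vertex $v$, at least $\epsilon|U|/3$ of the edges from $v$ into $U$ are free.'' As a uniform statement over all such $U$ this is simply false. For a fixed $v$ and fixed $U$ of size $u=\Theta(\log n)$ the probability that all $u$ edges from $v$ into $U$ are Breaker-occupied is roughly $(1-\epsilon)^u = n^{-\Theta(1)}$, but there are $\binom{n}{u}\approx n^{\Theta(\log n)}$ choices of $U$, so the union bound explodes. In fact a.a.s.\ \emph{every} vertex $v$ has a blocked set $U$ of size $\Theta(n)$, namely a subset of $v$'s \RB-neighbourhood, which has size $\approx(1-\epsilon)n$. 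The star-type statement you invoke is exactly the kind of property Lemma~\ref{lemGnm} is engineered to avoid: note that parts (ii)--(iv) there only forbid a near-clique on $O(\log n)$ vertices (so the number of required Breaker edges is $\Theta(\log^2 n)$, not $\Theta(\log n)$), or complete bipartite graphs with both sides of at least constant/linear size. None of those implies your star bound. One cannot salvage it by saying ``$U=V\setminus V(P)$ is just one specific set,'' because $V(P)$ is chosen adaptively as a function of which edges \RB has taken, so $U$ is not a fixed (or independent) set; you flag this adaptivity concern but do not resolve it, and the fix the paper uses (Proposition~\ref{prop:aux-randomgraph}, monotone properties of the $G(n,m)$ coupling) would require you to formalize your bad event as a monotone property of \RB's graph, which brings you straight back to the failing union bound.

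A second gap is the endgame amortization. Even granting expansion, each of the last $L=O(\log n)$ extensions would, under a standard Pósa booster, cost $O(\log n)$ rotations before the endpoint pool becomes large enough, totalling $O(\log^2 n)$ extra moves; the ``standard amortisation across the exponential growth'' that is supposed to bring this down to $O(\log n)$ is not given and I do not see how to supply it in this framework. Fundamentally, a single path has only two endpoints, so you pay rotation overhead for every scarce extension. The paper's strategy is genuinely different: it first builds a perfect matching in $n/2+O(\log n)$ rounds (Theorem~\ref{thm:resPmCMRB}), then treats the $n/2$ matching edges as a family of paths and merges them two-at-a-time. While at least $k(n)=\Theta(\log n)$ paths remain, the pool of endpoints is large, and Lemma~\ref{lemGnm}(ii) (no near-clique on $k(n)$ vertices) guarantees a vacant edge between some pair of endpoints -- so \emph{each} merge costs a single move. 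Only once $O(\log n)$ paths remain does \CM pay for $O(\log n)$ double moves, and once $O(1)$ remain, $O(1)$ triple moves. That staging is what buys $n+O(\log n)$ total; a rotation-extension build of a single path does not naturally produce a large endpoint pool for free during the bulk of the game.
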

	
	Note that all the games discussed in Theorems~\ref{thm:resPmCMRB} and \ref{thm:resHamCMRB} 
	have a significantly higher half-random threshold than the threshold bias 
	$\frac{n}{\ln{n}}$ in their fully deterministic and fully random version.
	
	{\bf Remark.} The results of this paper are based on the Master thesis
	of the first author~\cite{Jonas-thesis}. Recently, Krivelevich and
	Kronenberg~\cite{Krivelevich-Kronenberg} also studied the same problem 
	independently and used different strategies to obtain the same sharp thresholds. 
	In their paper they also deal with the $k$-connectivity game for arbitrary
	constant $k$, which we only consider here for $k=1$.
	For the perfect matching and Hamiltonicity games our 
	strategy for \CM succeeds much faster, 
	with wasting only
	$O(\log n)$ extra moves above the size of a winning set, as opposed to the
	$O(n^{\alpha})$ in \cite{Krivelevich-Kronenberg}. 

	\subsection{Terminology and organization}
	
	We will use the following terminology and conventions.
	A \textit{move} consists of claiming one edge. \textit{Turns} are taken alternately, 
	one turn can have multiple moves. For example: With a $(1:b)$ bias,
	Maker has $1$ move per turn, while Breaker has $b$ moves. 
	A {\em round} consists of a turn by Maker followed by a turn by Breaker. 
	By a strategy we mean a set of rules which specifies what the
	player does in any possible game scenario. For technical reasons 
	we {\em always} consider strategies that last until there are no free
	edges. This will be so even if the player has already won, already
	lost, or his strategy description includes ``then he forfeits''; in
	these cases the strategy just always occupies an arbitrary free edge,
	say with the smallest index. 
	The {\em play-sequence} $\Gamma$ of length $i$ of an actual game between Maker and Breaker 
	is the list $(\Gamma_1, \ldots , \Gamma_i) \in E(K_n)^i$ of the first $i$ edges that were occupied during the game by either of the players, 
	in the order they were occupied. 
	We make here the convention that a player with a bias $b> 1$ occupies 
	his $b$ edges within one turn in succession and these are noted in the play-sequence 
	in this order
	(even though in the actual game it makes no difference in what order one player's moves are
	occupied within one of his turns). 
	We denote Maker's graph after $t$ turns with $G_{M,t}$ and similarly Breaker's graph with $G_{B,t}$. Note that these graphs have $at$ and $bt$ edges respectively. 
	We will use the convention that Maker goes first. This is more of a notational 
	convenience, since the proofs can be easily adjusted to Breaker going first, and yielding 
	the same asymptotic results. 
	We will routinely omit rounding signs, whenever they are not crucial in affecting our asymptotic statements.
	
	We first introduce the notion of a \textit{permutation strategy} in the next section, and continue to prove Theorems \ref{thm:resPmCMRB} and \ref{thm:resHamCMRB} in Section \ref{sec:CMRB}.
	
	\section{The permutation strategy}\label{sec:permutation}
	In this section, we discuss an alternative way to think of half-random games that will simplify our reasoning in many proofs. This discussion does not depend on the game's win conditions, so we will refer to the random player and the clever player as \RP and \CP respectively, regardless of who is Breaker and Maker. This allows us to also use Proposition \ref{prop:permutationStrategy} in \cite{THRG1}. 
	We refer to \RP's and \CP's bias as $r$ and $b$ respectively.
	
	One reason why half-random games are more difficult to study than fully random games, is that \RP's graph is in fact not fully random. This is because the edges occupied by \CP can no longer be claimed by \RP, the deterministic and random aspects of the game interact. Our goal in this section is to relate \RP's graph to a fully random auxiliary graph, so we can apply results from the rich theory of random graphs still.
	
	Given a permutation $\sigma \in S_{E(K_n)}$ of the edges of $K_n$, i.e. $\sigma: \left[ {n\choose 2} \right] \rightarrow E(K_n)$, a player can use $\sigma$ to determine a strategy as follows. We say that he follows the \emph{permutation strategy} $\sigma$ if for every move, he scans through the edges in $\sigma$ and occupies the first one that is free. That is, he occupies the edges in their order in $\sigma$, skipping the ones occupied by his opponent. This naturally leads to a randomized strategy for \RP: in the beginning, she picks $\sigma$ uniformly at random out of all permutations, and then follows the corresponding permutation strategy. As it turns out, this is equivalent to the original method by which \RP chooses her moves (i.e., always choosing a uniformly random free edge). We formalize this in the following proposition and include a proof for completeness.	
	\begin{prop}\label{prop:permutationStrategy}
		For every strategy $S$ of \CP in a $(r:c)$-game on $E(K_n)$ the following is true.
		For every 
		$m\leq {n\choose 2}$
		and every sequence 
		$\Gamma = (\Gamma_1, \ldots , \Gamma_m)$ of distinct edges,
		the probability that 
		$\Gamma$ is the play-sequence of a half random game between 
		\CP  playing according to strategy $S$ and \RP
		is equal to the probability that $\Gamma$ is the play-sequence of the game when 
		$\RP$ plays instead according to the random permutation strategy.
	\end{prop}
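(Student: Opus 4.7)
My plan is to prove the equality of the two distributions on play-sequences by induction on $m$. First I would reduce to the case that $S$ is deterministic by conditioning on any randomness \CP might use, so that \CP's moves become a deterministic function of the history. The base case $m=0$ is trivial, and in the inductive step it suffices to show that for every play-sequence $\Gamma_{<m}=(\Gamma_1,\ldots,\Gamma_{m-1})$ of positive probability, the conditional distribution of $\Gamma_m$ given $\Gamma_{<m}$ is the same under both randomizations of \RP. When the $m$-th move is a move of \CP, the next move equals $S(\Gamma_{<m})$ deterministically under both strategies, so the distributions match.

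The essential case is when the $m$-th move is a move of \RP. I would prove the claim that under the random permutation strategy, conditional on the history $\Gamma_{<m}$, the next move is uniformly distributed over the set of currently free edges $F_m=E(K_n)\setminus\{\Gamma_1,\ldots,\Gamma_{m-1}\}$. Since the original ``uniformly random free edge'' strategy is uniform on $F_m$ by definition, this completes the induction. The claim is proved by a symmetry argument: for any two free edges $e,e'\in F_m$, I would exhibit a bijection $\phi$ on the set of permutations $\sigma$ that are consistent with the history, mapping ``next \RP move is $e$'' onto ``next \RP move is $e'$''. The natural choice for $\phi$ is the map that swaps the positions of $e$ and $e'$ in $\sigma$, which is an involution and preserves the uniform distribution on $S_{E(K_n)}$.

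The main obstacle is verifying that $\phi$ preserves consistency of $\sigma$ with $\Gamma_{<m}$, i.e., preserves the condition that \RP's $i$-th move under $\sigma$ equals $r_i$ for every prior \RP move $r_i$ in $\Gamma_{<m}$. The key observation is that in every $\sigma$ consistent with $\Gamma_{<m}$, both $e$ and $e'$ must appear after all of $r_1,\ldots,r_j$ in the order of $\sigma$: if, say, $e$ preceded some $r_i$ in $\sigma$, then since $e\in F_m$ is not yet occupied at the time of \RP's $i$-th turn, \RP would have claimed $e$ instead of $r_i$, contradicting consistency (note that \CP's moves up to that turn are fixed by $S$ and the history). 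Given this, swapping the positions of $e$ and $e'$ does not alter which edges appear before any $r_i$ in $\sigma$, so all consistency conditions survive, while the first free edge at move $m$ flips from $e$ to $e'$. This exchangeability yields equal probabilities for the two events, hence the desired uniform distribution on $F_m$.
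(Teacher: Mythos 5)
Your proof is correct, but it takes a genuinely different route from the paper's. The paper argues globally: it writes down the probability of a fixed length-$m$ play-sequence $\Gamma$ under the ``uniform free edge'' model as $\prod_{j\in R}\frac{1}{\binom{n}{2}-j+1}$, and then directly enumerates the permutations $\sigma$ that reproduce $\Gamma$ --- characterizing them by three order conditions and counting via a constructive insertion scheme --- obtaining $\bigl(\binom{n}{2}-m\bigr)!\,\prod_{j\in C}\bigl(\binom{n}{2}-j+1\bigr)$, which after dividing by $\binom{n}{2}!$ matches. You instead proceed by induction on $m$, reducing the claim to the statement that, conditional on a realizable history $\Gamma_{<m}$, \RP's next move under the permutation strategy is uniform on the currently free edges $F_m$; this you prove by an exchangeability argument, swapping the positions of two free edges $e,e'$ in $\sigma$. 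Your key lemma --- that in any $\sigma$ consistent with $\Gamma_{<m}$ every edge of $F_m$ must appear \emph{after} all of \RP's previously claimed edges, so that the swap preserves consistency while toggling the next \RP move between $e$ and $e'$ --- is exactly right, and is in fact the same structural fact the paper encodes in its order conditions (1)--(3). The paper's computation is shorter and yields an explicit formula; your inductive/exchangeability argument is more conceptual, makes the ``next move is uniform on free edges'' intuition explicit, and localizes where the independence from $S$ actually enters. One small imprecision: if $e$ preceded $r_i$ in $\sigma$, \RP would claim the first free edge preceding or equal to $e$, not necessarily $e$ itself; but in either case it would not be $r_i$, so your contradiction stands. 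Your opening reduction to deterministic $S$ by conditioning on \CP's internal randomness is sound and harmless (the paper implicitly assumes $S$ deterministic).
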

\begin{proof}
		Let $R\subseteq [m]$ and $C = [m]\setminus R$ be the subsets of coordinates in any play-sequence of length $m$, 
		which belong to \RP's and \CP's moves in an $(r:c)$-biased game, respectively.
		Note that these sets are determined by $m, r,$ and $c$ and by who starts the game (and independent of the play-sequence).
		
		Let $\Gamma =  (\Gamma_1, \ldots , \Gamma_m)$ be a sequence of distinct edges which can be realized as 
		a play-sequence provided \CP plays according to strategy $S$ (otherwise the probability of $\Gamma$ is $0$ in both games). 
		In other words, for every $j\in C$, if $(\Gamma_1, \ldots , \Gamma_{j-1})$ is a play-sequence of the $(r:c)$-game then the next edge
		\CP chooses according to $S$ is $\Gamma_j$.
		
		Clearly, the probability that this particular $\Gamma$ 
		is the play-sequence of the half-random game is 
		\begin{equation} \label{eq:technical-count}
		\prod_{j\in R}\frac{1}{{n\choose 2} - j+1}.
		\end{equation}

		Let us now turn to the game generated by the random permutation strategy 
		and let us define ${\cal N}(\Gamma, S) = {\cal N}$ to be the set
		of those permutations 
		$\sigma\in S_{E(K_n)}$ which produce the play-sequence $\Gamma$ when \RP plays with the permutation strategy
		$\sigma$ against \CP's strategy $S$.
		Then ${\cal N}$ consists exactly of those permutations $\sigma$ for which
		\begin{itemize} 
			\item[(1)] the relative order of the edges in $\{ \Gamma_i  : i\in R\}$
			agrees in $\sigma$ and $\Gamma$
			\item[(2)] the edges in $\{ \Gamma_i : i\in R\}$ precede in $\sigma$ the edges in 
			$E(K_n) \setminus \{ \Gamma_i : i\in [m]\}$. 
			\item[(3)] For every $j\in C$, the Clever-edge $\Gamma_j$ comes after 
			all the Random-edges  $\{\Gamma_i : i\in R, i< j\}$ in $\sigma$.
		\end{itemize}
		We can obtain every such permutation by starting exactly with the 
		restriction of $\Gamma$ to $R$, so (1) is satisfied. 
		Then we append the edges from 
		$E(K_n) \setminus \{ \Gamma_i : i\in [m]\}$  in an arbitrary order, 
		so $(2)$ holds. Finally we insert 
		the Clever-edges $\Gamma_j$, $j\in C$, one by one, in decreasing order, 
		making sure that $(3)$ is maintained. 
		When inserting the edge $\Gamma_j$, the number of possible places 
		is exactly ${n\choose 2} - j +1$, since all the edges $\Gamma_l$ with $l> j$ are already 
		there and all the edges of index $l< j$ which are already there are contained in $R$ (and hence must precede $\Gamma_j$). 
		Hence the number of permutations in ${\cal N}$ is 
		$$\left({n\choose 2} - m\right)! \prod_{j\in C} \left({n\choose 2}- j+1\right).$$
		Hence the probability of ${\cal N}$ is equal to \eqref{eq:technical-count}
		since $C$ and $R$ partition $[m]$. 
	\end{proof}
	
	In the following, for $1\leq m\leq {n\choose 2}$ and a permutation $\sigma \in
	S_{E(K_n)}$, we let $\Gsigma{\sigma}{m}\subseteq K_n$ be the subgraph with edge set $E(\Gsigma{\sigma}{m}) = \{ \sigma(i) : 1\leq i \leq m\}$.
	Note that if the permutation $\sigma$ is chosen uniformly at random, then $\Gsigma{\sigma}{m}$ is distributed like the random
	graph $G(n,m)$.
	
	Let us now switch back to the \CM / \RB setup. Assume \RB plays a particular game according to a permutation
	$\sigma \in S_{E(K_n)}$, and let $m_i$ be the index in $\sigma$ of the last edge he takes
	in round $i$, i.e. that edge is $\sigma (m_i)$. Then \RB's graph after round $i$ is contained in $\Gsigma{\sigma}{m_i}$.  
	Note that $m_i \geq ir$, but since \RB maybe skipped some edges occupied by \CM, the actual value depends on the strategy
	of \CM and the 
	permutation $\sigma$ itself. However, since \CM occupied only
	$ic$ edges so far, we also have that $m_i \leq i(r+c)$.
	Hence  \RB's graph after the $i$th round is always contained in the random
	graph $\Gsigma{\sigma}{i(r+c)}$. This line of reasoning leads to the following proposition.
	
	\begin{prop}\label{prop:aux-randomgraph} Let $b$ and $i$ be positive integers such that 
		$i\leq \frac{{n\choose 2}}{b+1}$. Then for every 
		monotone increasing graph property ${\cal P}$ and 
		strategy $S$ of \CM for a $(1:b)$ half-random game the following holds.
		The probability that in a half-random game against strategy $S$ of \CM the 
		graph of \RB after the $i$th round has property 
		${\cal P}$ is at most $\prob{ G(n, i(b+1)) \mbox{ has property ${\cal P}$} }$.
	\end{prop}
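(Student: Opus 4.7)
The plan is to lift \RB's strategy to the equivalent random permutation strategy via Proposition~\ref{prop:permutationStrategy}, and then exhibit \RB's graph after round $i$ as a subgraph of a fixed-length prefix of the random permutation.

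First I would invoke Proposition~\ref{prop:permutationStrategy} to replace \RB's rule of choosing each move uniformly from the free edges with the random permutation strategy: sample $\sigma\in S_{E(K_n)}$ uniformly and then on each move occupy the first $\sigma$-edge that is still free. Since Proposition~\ref{prop:permutationStrategy} preserves the distribution of the play-sequence against any fixed strategy $S$ of \CM, it suffices to prove the bound in this alternative model. (If $S$ is randomized, condition on \CM's coins throughout.)

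Next I would bound the $\sigma$-index reached by \RB in round $i$. Let $m_i$ be the index in $\sigma$ of the last edge \RB occupies in round $i$. By the definition of the permutation strategy, \emph{every} edge $\sigma(1),\ldots,\sigma(m_i)$ is occupied by somebody at the end of round $i$: exactly $ib$ of these are \RB-edges, so the remaining $m_i-ib$ are \CM-edges. Since \CM has made exactly $i$ moves in rounds $1,\ldots,i$, we have $m_i-ib\le i$, i.e.\ $m_i\le i(b+1)$. The assumption $i\le \binom{n}{2}/(b+1)$ guarantees that the prefix of length $i(b+1)$ is well defined and that round $i$ indeed takes place.

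Finally, \RB's graph after round $i$ is a subgraph of $\Gsigma{\sigma}{i(b+1)}$, which — as noted just before the proposition — has the distribution of $G(n,i(b+1))$ when $\sigma$ is uniform. By monotonicity of $\mathcal{P}$, whenever \RB's graph has $\mathcal{P}$, so does $\Gsigma{\sigma}{i(b+1)}$, and taking probabilities yields the stated inequality. The argument has no real obstacle; the only thing to be careful about is the bookkeeping in the bound $m_i\le i(b+1)$, where one must remember that the skipped positions in $\sigma$ correspond precisely to \CM's moves made so far.
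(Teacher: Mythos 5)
Your proof is correct and follows essentially the same route as the paper: invoke Proposition~\ref{prop:permutationStrategy} to pass to the random permutation strategy, observe that \RB's graph after round $i$ sits inside the prefix $\Gsigma{\sigma}{i(b+1)}$ (the paper makes exactly your bookkeeping argument for $m_i \le i(b+1)$ in the paragraph preceding the proposition), identify that prefix with $G(n,i(b+1))$, and finish by monotonicity of $\mathcal{P}$.
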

	
	\begin{proof} 
		Consider all play sequences of length $i(1+b)$ that are possible
		with \CB playing according to $S$ so that by round $i$ his graph has
		property ${\cal P}$. 
		By the previous proposition the probability of these play sequences in
		the half-random game is equal to $\frac{|{\cal M}|}{{n\choose 2}!}$,
		where ${\cal M} = {\cal M}({\cal P}, i, S)$ is the set of  permutations
		$\sigma$ of $E(K_n)$ having the property that
		if \RB plays according to the permutation strategy $\sigma$ against strategy $S$ of \CM,
		then by the end of round $i$ \RB's graph has property ${\cal P}$. 
		
		Now recall that all edges of \RB's graph in the first $i$ rounds while
		playing according to an arbitrary permutation strategy $\sigma$ 
		are among the first $i(b+1)$ elements of $\sigma$.
		Therefore, since ${\cal P}$ is monotone increasing,
		for any permutation $\sigma \in {\cal M}$, the graph $\Gsigma{\sigma}{i(b+1)}$ has property ${\cal P}$.
		Since for a uniform random permutation $\sigma$, the graph 
		$\Gsigma{\sigma}{i(b+1)}$ is a uniform random graph 
		$G(n, i(b+1))$, the statement follows.
	\end{proof}

	\section{\CM vs \RB}\label{sec:CMRB}
	In this section we prove the non-trivial parts of Theorems~\ref{thm:resPmCMRB} and \ref{thm:resHamCMRB} involving \CM's strategy. 
	
	For our proofs we fix an $\epsilon>0$ sufficiently small and set the following values: 
	\begin{eqnarray}
	p &:=& 1-\frac{\epsilon}{2}\\
	k=k(n) &:=& 4\left\lceil\frac{\ln{n}}{\ln{(1/p)}}\right\rceil\\
	l &:=& 8\left\lceil\frac{1}{\epsilon \ln (1/p)}\right\rceil
	\end{eqnarray}
	Note that $k=k(n)$ is of the order $\ln n$ and $l$ is constant
	depending only on $\epsilon$.
	
	We will need a few properties of  \RB's graph, which are borrowed from the uniformly
	random model $G(n,m)$. 
	\begin{lem}\label{lemGnm} 
		Let $n, b, t\in \nats$ such that $(b+1)t\leq m:=p{n\choose 2}$ and let
		$S$ be a strategy of \CM in a $(1:b)$ half-random game. Then a.a.s. the graph $G_{B,t}$ of \RB after $t$ 
		rounds in a game against \CM playing according to $S$ has the following properties.
		\begin{enumerate}[(i)]
			\item\label{lemDegree} 
			$G_{B,t}$ has maximum degree at most $\left(1-\frac{\epsilon}{4}\right)n$.
			\item\label{lemNoClique} There is no set of $k$ vertices in $G_{B,t}$ inducing
			at least ${k\choose 2} -\frac{k}{2}$ edges.
			\item\label{lemNoBipartite} $G_{B,t}$ contains no complete bipartite graph of size $\frac{\epsilon}{8} n\times l$.
			\item\label{lemNoBipartiteBalanced} $G_{B,t}$ contains no complete bipartite graph of size $\frac{\epsilon}{32l} n\times \frac{\epsilon}{32l} n$.
		\end{enumerate}
	\end{lem}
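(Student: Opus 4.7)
The plan is to observe that the violation of each of (i)--(iv) is a monotone increasing graph property, so that Proposition~\ref{prop:aux-randomgraph} reduces the question to bounding the corresponding probabilities in the random graph $G(n,m)$ with $m = p\binom{n}{2}$ edges. Indeed, each bad event is monotone (a vertex of large degree stays large after addition of edges, a dense subset stays dense, a complete bipartite subgraph survives), so since $(b+1)t \le m$, monotonicity in the number of edges combined with the proposition gives
\[
\Pr[G_{B,t}\text{ is bad}] \le \Pr[G(n,(b+1)t)\text{ is bad}] \le \Pr[G(n,m)\text{ is bad}].
\]

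For (i) I would use that the degree of a fixed vertex in $G(n,m)$ is hypergeometric with mean $\approx p(n-1) = (1-\epsilon/2)(n-1)$, so a Chernoff-type tail bound gives exponential-in-$n$ concentration, and a union bound over the $n$ vertices finishes the job. For (ii)--(iv) the method is the first moment: for each forbidden substructure, bound the expected number of copies in $G(n,m)$ by a union bound and show it is $o(1)$. Explicitly, the expectations to bound are $\binom{n}{k}\binom{\binom{k}{2}}{k/2}\, p^{\binom{k}{2}-k/2}$ for (ii), $\binom{n}{\epsilon n/8}\binom{n}{l}\, p^{l\epsilon n/8}$ for (iii), and $\binom{n}{s}^2\, p^{s^2}$ with $s = \epsilon n/(32l)$ for (iv).

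The main technical point is the careful calibration of $k$ and $l$ so that the ``probabilistic gain'' from $p^{(\cdot)}$ beats the entropy coming from the binomial coefficients. In (ii) the exponent of $p$ is quadratic in $k$ while $\log\binom{n}{k}$ is only linear, so the choice $k \asymp \ln n/\ln(1/p)$ makes $p^{\Theta(k^2)}$ dominate; the combinatorial factor $\binom{\binom{k}{2}}{k/2} \le (ek)^{k/2}$ contributes only a lower-order $O(k\log k)$ to the log. In (iv) the gain $s^2 \ln(1/p)$ is quadratic in $n$ while the entropy $\log \binom{n}{s}^2$ is linear, so this case is easy. The delicate case will be (iii), where both terms are linear in $n$: the entropy is approximately $(\epsilon n/8)\log(8e/\epsilon)$, the gain is $(l\epsilon n/8)\ln(1/p)$, so one needs $l\ln(1/p)$ to exceed $\log(1/\epsilon)$ by a definite margin. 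This is comfortably ensured by the chosen $l = \Theta\!\bigl(1/(\epsilon \ln(1/p))\bigr)$, since $1/\epsilon \gg \log(1/\epsilon)$ for small $\epsilon$; the polynomial factor $\binom{n}{l}\le n^l$ is negligible because $l$ is constant. Beyond verifying this (iii) computation cleanly, the rest is routine.
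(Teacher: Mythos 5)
Your proposal matches the paper's proof essentially line by line: both reduce via Proposition~\ref{prop:aux-randomgraph} to $G(n,m)$ with $m = p\binom{n}{2}$, both dispose of the maximum degree bound (i) by concentration for a single vertex plus a union bound (the paper simply cites Erd\H{o}s--R\'enyi), and both use the first-moment bounds $\binom{n}{k}\binom{\binom{k}{2}}{k/2}p^{\binom{k}{2}-k/2}$ for (ii) and $\binom{n}{r}\binom{n}{q}p^{rq}$ for (iii), (iv) with exactly your choices of $r,q$. The only cosmetic difference is in (iii), where the paper uses the crude estimate $\binom{n}{\epsilon n/8}\le 2^n$ while you invoke the sharper entropy bound $\approx (\epsilon n/8)\log(8e/\epsilon)$; the chosen $l = \Theta\bigl(1/(\epsilon\ln(1/p))\bigr)$ comfortably accommodates either.
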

	\begin{proof}
		We show the properties for $G(n,m)$ and then transfer them to $G_{B,t}$ using Proposition~\ref{prop:aux-randomgraph}. 
		To estimate, we repeatedly use that $\frac{{N-q \choose m-q}}{{N\choose m}} = \prod_{i=0}^{q-1} \frac{m-i}{N-i} \leq \left( \frac{m}{N}\right)^q=p^q$, where $N={n\choose 2}$.
		
		For part $(i)$ see e.g. \cite{erdrenyi60}, Theorem 10.
		
		For part $(ii)$ we have that the probability that there exists a $k$-element set $K$ such that $G(n,m)$ has at least ${k\choose 2} -\frac{k}{2}$ edges in $K$ is, by the union bound, at most
		\begin{eqnarray*} 
			{n\choose k} {{k\choose 2} \choose {k\choose 2} - \frac{k}{2}} \frac{\bigchoose{N - {k\choose 2} + \frac{k}{2}}{m-{k\choose 2} + \frac{k}{2}}}{{N \choose m}} &  \leq & 
			\left(\frac{en}{k}\right)^k \left(e(k-1)\right)^{k/2} p^{{k\choose 2} - \frac{k}{2}}\\ 
			& \leq  &e^{k(3/2+\ln n - \frac{1}{2}\ln k - \frac{k-2}{2} \ln (1/p))} =o(1)
		\end{eqnarray*}
		We prove parts $(iii)$ and $(iv)$ similarly, by observing that the
		probability of the event that there is a complete bipartite graph of size $r\times q$ in $G(n,m)$ is upper
		bounded by $${n\choose r}{n\choose q} \frac{{N-rq \choose
				m-rq}}{{N\choose m}}.$$
		For $(iii)$, we set $r=l$ and $q=\frac{\epsilon}{8}n$ and estimate
		by $n^r2^np^{qr} = e^{l\ln n + (\ln 2) n - \frac{\epsilon}{8}nl\ln
			(1/p)}.$ This tends to $0$ by the choice of $l$. 
		For $(iv)$ we set $r=q=\frac{\epsilon}{32l}n$ and estimate with
		$2^n2^n p^{\frac{\epsilon^2}{1024l^2} n^2} = o(1)$.
	\end{proof}
	
	Towards the end of both of his strategies, \CM occasionally sets out to make 
	a {\em double move} or  a {\em triple move}.  By this we mean that \CM  identifies two or three 
	free edges which he intends to occupy immediately in the next two or three rounds, respectively. 
	To occupy the first edge is of course no problem since it is free, 
	but in order to be able to occupy the second or third edge, it is also  
	necessary that \RB did not occupy them in his turn(s) in between. 
	The next simple lemma states that this is very likely if there are still many free edges.
	\begin{lem} \label{lem:double-triple} The probability that 
		\CM is not able to complete a double move (or a triple move) within the first $t$ rounds of 
		a $(1:b)$ half-random game with $b\leq n$ is at most
		$\frac{4}{\epsilon n}$ (or $\frac{12}{\epsilon n}$), provided 
		the number of free edges is at least ${n\choose 2} - (b+1)t \geq \frac{\epsilon}{4}n^2$.
	\end{lem}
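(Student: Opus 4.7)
The plan is to reduce each scenario to the elementary fact that a uniformly random $b$-subset of an $F$-element set contains any fixed element with probability $b/F$. For a double move, \CM designates two free edges $e_1, e_2$, plays $e_1$, and needs $e_2$ to survive \RB's subsequent turn; the move fails precisely when \RB occupies $e_2$ among her $b$ moves. At the start of that turn the number of free edges $F$ satisfies $F \geq {n\choose 2} - (b+1)t \geq \frac{\epsilon}{4} n^{2}$, since at most $(b+1)t$ edges can have been claimed by the end of round $t$, and certainly no more are claimed at any intermediate moment. By symmetry of \RB's sequential uniform draws (or equivalently by Proposition~\ref{prop:permutationStrategy}), her $b$ moves in a turn form a uniformly random $b$-subset of the current set of free edges, so the probability that $e_2$ lies in it is exactly $b/F \leq n / \bigl(\frac{\epsilon}{4} n^{2}\bigr) = \frac{4}{\epsilon n}$.

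For the triple move, \CM designates $e_1, e_2, e_3$ and plays them in three consecutive rounds. The move fails only if \RB claims $e_2$ or $e_3$ during her first intervening turn, or $e_3$ during her second. Each such event is located in a turn lying within the first $t$ rounds, so at its start the number of free edges is still at least $\frac{\epsilon}{4} n^{2}$, and the estimate above bounds the probability of each failure mode by $\frac{4}{\epsilon n}$. A union bound over the three failure modes then yields the claimed $\frac{12}{\epsilon n}$.

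I do not foresee a real obstacle here, as the argument is a direct probability calculation. The one small piece of bookkeeping is to verify that the lemma's hypothesis ``$F \geq \frac{\epsilon}{4} n^{2}$ after $t$ rounds'' really does transfer to the precise moment each of \RB's at most two intervening turns begins, which is immediate from monotonicity of the free-edge count; the only conceptual input is the symmetry observation that \RB's $b$ moves form a uniform $b$-subset of the free edges at the start of her turn.
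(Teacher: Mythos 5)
Your proposal is correct and follows essentially the same route as the paper's proof: you bound the chance that any one of \RB's (at most $n$) intervening draws lands on a designated edge while the free-edge count stays at least $\frac{\epsilon}{4}n^2$, then union bound over the two (resp.\ three) ways the double (resp.\ triple) move can be spoiled. The only cosmetic difference is that you package \RB's whole turn as a uniform $b$-subset giving $b/F$ directly, whereas the paper union bounds over individual moves, each with probability at most $\frac{4}{\epsilon n^2}$ --- the two computations are identical.
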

	\begin{proof}The probability, that out of the still at least
		$\frac{\epsilon}{4}n^2$ free edges \RB occupies exactly the second edge
		of the double move \CM has just started, is at most
		$\frac{4}{\epsilon n^2}$. He has at most $n$ chances before $\CM$ completes 
		his double move, hence the upper bound follows. For triple moves $\RB$
		has $n$ chances to occupy the second edge of the triple move and $2n$
		chances for the third edge. 
	\end{proof}
	
	\subsection{\CM builds a perfect matching}
	In this section we prove the non-trivial part of Theorem~\ref{thm:resPmCMRB}.
	We consider the $(1:b)$ perfect matching game between \CM and \RB on $E(K_n)$, 
	where $n$ is even and $b\leq (1-\epsilon)n$ for an arbitrary but fixed $\epsilon, 0< \epsilon < 1/2$.
	Throughout the proof when we say that a 
	vertex is \textit{isolated}, we always mean that it is isolated in \CM's graph at the current point 
	in the game. 
	
	During the game \CM maintains a matching $M$ of his graph. He starts with 
	$M=\emptyset$ and then eventually achieves that $M$ is perfect, 
	at which point \CM wins the game.
	Let us call an edge of $K_n$ {\em vacant} if it is neither occupied by \RB nor used 
	by \CM in his matching $M$. For an isolated vertex $a$, we define
	\[X_a:=\left\{u\in V: au\mbox{ is vacant}\right\}\]
	to be the set of vertices with a vacant edge to $a$.  
	Further, let \[X_a^+:=\left\{v\in V: vu\in M, u\in X_a\right\}.\]
	\\
	We now define strategy $S_{PM}$ for \CM. 
	If $S_{PM}$ calls \CM to take an edge he has already
	occupied, he takes an arbitrary free edge. If anytime during a game 
	\CM is not able to make a move according to the directions below, we
	say that he \emph{forfeits}. 
	(Recall that for technical reasons, \CM continues 
	to play in this case by always claiming the free edge with the
	smallest index until the board is full.)
	The strategy consists of three stages.
	\begin{enumerate}
		\item[{\bf Stage 1.}] This stage lasts while $|M| < \frac{n-k(n)}{2}$. \CM iteratively occupies 
		an arbitrary free edge $e$ between two isolated vertices, 
		and adds $e$ to $M$.
		\item[{\bf Stage 2.}]  This stage lasts until $\frac{n-k(n)}{2}\leq |M| < \frac{n-l}{2}$
		and consists of $\frac{k(n)-l}{2}$ double moves, each increasing the size of $M$ by one
		(using augmenting paths of length $3$).
		For each of his double moves \CM identifies an arbitrary 
		edge $uv\in M$ such that there exists isolated vertices 
		$a\in X_u$ and $b\in X_v$  and then he occupies 
		$au$ and $bv$ in his next two turns. 
		Finally \CM removes $uv$ from $M$, and adds $au$ and $bv$ instead.  
		\item[{\bf Stage 3}] This stage lasts until $\frac{n-l}{2} \leq |M| < \frac{n}{2}$ 
		and consists of $\frac{l}{2}$ triple moves, each increasing the size of $M$ by one
		(using augmenting paths of length $5$). 
		For each of his triple moves \CM first identifies two arbitrary isolated vertices $a,b$ and 
		then an arbitrary vacant edge $wz$ with $w\in X_a^+, z\in X_b^+$. 
		Let $u\in X_a$ and $v\in X_b$ be the vertices with $uw\in M$ and $zv\in M$, respectively. 
		In his next three turns, \CM occupies  $au$, $wz$ and $vb$. 
		He then adds these three edges to $M$, while removing $uw$ and $zv$. 
	\end{enumerate}
	
	Throughout this process $M$ remains a matching, and with each single-/double- or triple move increases in size by one.
	Thus, after all 3 stages are complete, $M$ is a matching of size $\frac{n}{2}$, i.e. a perfect matching.
	
	Therefore, what remains to show is that \CM can a.a.s. execute strategy $S_{PM}$ without forfeiting.

	\begin{proof}[Proof of Theorem~\ref{thm:resPmCMRB}.] 
		Let $\epsilon>0$ be fixed and let $b\leq (1-\epsilon)n$ be a positive integer.
		We prove that \CM, playing  against \RB in the $(1:b)$ half-random game, 
		can execute the strategy $S_{PM}$ without forfeiting, a.a.s. 
		This in particular, will imply that \CM wins 
		the $(1:b)$ half-random perfect matching game (and thus also the degree-1 game)
		within $\frac{n}{2}+O(\ln n)$ moves, a.a.s.
		
		First note that strategy $S_{PM}$ takes at most 
		\[t:=\frac{n-k(n)}{2}+k(n)-l+\frac{3l}{2}=\frac{n+k(n)+l}{2} = \frac{n}{2} + O(\ln n)\]
		rounds. This is because in Stage 1, $\frac{n-k(n)}{2}$ edges are added to $M$, and in Stage~2 $\frac{k(n)-l}{2}$ edges, taking two rounds each. This leaves $\frac{l}{2}$ edges to be added in Stage~3, which takes $\frac{3l}{2}$ rounds.
		
		Observe also that for the total number of edges claimed by either player we have 
		\[(b+1)t=\left((1-\epsilon)n+1\right)\frac{n+o(n)}{2}\leq p{n\choose 2 } = m.\]
		This has two important consequences. On the one hand the conditions of 
		Lemma~\ref{lemGnm} are satisfied, so a.a.s. all properties $(i)-(iv)$ hold for 
		the graph \RB occupies by turn $t$, and since the properties are decreasing, also at all previous points in the game. On the other hand  ${n\choose 2} - (b+1)t \geq \frac{\epsilon}{4}n^2$, 
		so by Lemma~\ref{lem:double-triple} the probability that any double or triple move 
		\CM has started cannot be completed is at most $\frac{12}{\epsilon n}$.
		Since the number of double moves is $O(\ln n)$ and the number of triple moves is $O(1)$, this will
		occur only with probability $O(\frac{\ln n}{n})$. In other words, a.a.s. \CM can complete every double or triple move he starts.
		
		We now assume that indeed these two events hold, i.e. \RB's graph has
		properties $(i)-(iv)$ of Lemma~\ref{lemGnm} up to at least round $t$,
		and \CM can complete every double or triple move he starts. We go
		through the three stages and show that under these conditions, the
		strategy can be carried through without forfeiting.
		
		First let $|M| < \frac{n-k(n)}{2}$, so we are in Stage 1. 
		Since in Stage 1 there are $\frac{n-k(n)}{2}$ rounds, there must be at least $k(n)$ isolated 
		vertices left in \CM's graph. By Property $(ii)$ of Lemma~\ref{lemGnm}, \RB has no clique of size $k(n)$ occupied, and thus there must be at least one vacant edge between two isolated vertices.

		Let now $\frac{n-k(n)}{2}\leq |M| < \frac{n-l}{2}$, so we are in Stage 2. 
		Let $a$ be an arbitrary isolated vertex and consider the edges
		of $K_n$ between $X_a^+$ and  the set $L$ of isolated vertices
		different from $a$. If any of these edges is vacant, \CM can start his
		double move. Otherwise \RB's graph contains a
		complete bipartite graph  of size $|L|\times |X_a^+|$. Since Stage 2 is
		not yet over, we have $|L|\geq l$. For the other side,
		$\left|X_a^+\right|= 2|M| - \deg_B(a) \geq n-k(n) -\deg_B(a) \geq
		\frac{\epsilon}{8}n$ by Property $(i)$ of Lemma~\ref{lemGnm} and since $k(n) = O(\ln n)$. 
		Since by Property $(iii)$ of Lemma~\ref{lemGnm} \RB's graph contains no
		complete bipartite graph $K_{\frac{\epsilon}{8}n, l}$, one of the
		edges between $L$ and $X_a^+$ must be vacant, allowing \CM to start his double move. 
		
		Let now $\frac{n-l}{2}\leq |M| < \frac{n}{2}$, so we are in Stage 3. Here \CM has to select
		triplets of moves. For this there needs to be two isolated vertices $a, b$ such that 
		there is a vacant edge $wz$ with $w\in X_a^+, z\in X_b^+$. 
		Since $M$ is not yet perfect and $n$ is even, there must be two isolated vertices $a$ and 
		$b$. As in Stage 2, we also have that both the sizes $\left|X_a^+\right|$ and $\left|X_b^+\right|$  
		are at least  $2|M| - \Delta (G_B) \geq n- l - \Delta (G_B)\geq \frac{\epsilon}{8} n$ (recall that $l = O(1)$). 
		In particular, there are disjoint sets $Y_a^+\subset X_a^+$ and $Y_b^+\subset X_b^+$ of size at least $\frac{\epsilon}{16} n$ each. By Property $(iv)$ of Lemma~\ref{lemGnm}, \RB's graph contains no complete bipartite graph $K_{\epsilon n/16,\epsilon n/16}$, which means that indeed there is a vacant edge $wz$ with $w\in X_a^+, z\in X_b^+$.
	\end{proof}

	\subsection{\CM builds a Hamilton cycle}
	\label{sec:CMRBHam}
	
	We now turn towards the Hamiltonicity game and show the non-trivial direction of Theorem~\ref{thm:resHamCMRB}. 
	Recall the values $p$, $k(n)$ and $l$ as defined above.
	
	First let us describe \CM's strategy informally. The analysis uses many ideas from the perfect matching game. 
	Actually,  first \CM follows the strategy $S_{PM}$ to build a perfect matching $M$ in 
	$\frac{n}{2}+O(\ln n)$ moves.
	Next, \CM performs another sequence of similar steps, using the matching $M$ as a starting point, 
	and connecting its edges first to a Hamilton path and then a cycle.  
	The central structure \CM maintains will be a sequence ${\cal P}_i$, $i=\frac{n}{2}, \ldots, 1$,
	of families of paths of Maker's graph, such that the paths of each family ${\cal P}_i$ 
	partition the vertex set.
	To start \CM sets ${\cal P}_{\frac{n}{2}}:=M$ to be a set of $\frac{n}{2}$ paths of length $1$.  
	Then \CM performs a sequence of 
	$\frac{n}{2}$ single, double, or triple moves. Each of these moves reduces the number of
	paths in the family by one, hence ${\cal P}_{1}$ contains a single Hamilton path. 
	In his last triple move \CB closes this path to a Hamilton cycle.
	Similarly to the perfect matching game, the number of double moves of \CB will be $O(\ln n)$ 
	and the number of triple moves will only be $O(1)$. Hence the game lasts at most $n + O(\ln n)$ rounds.  
	For convenience in notation we will assume that $n$ is even, the odd
	case can be handled similarly: \CM first occupies a matching of
	size $\frac{n-1}{2}$, then connects the lone isolated vertex to an
	arbitrary matching edge and thus builds his
	initial family of nontrivial paths ${\cal P}_{\frac{n-1}{2}}$ covering the vertex set.
	
	In the following, for a path $\gamma\in {\cal P}_i$, we write $\gamma$ as a sequence of vertices 
	$\gamma_0,\dots,\gamma_{s(\gamma)}$, where $s(\gamma)$ denotes the length of 
	$\gamma$. We use a fixed direction on the ordering, for example, we can demand that 
	$\gamma_0<\gamma_{s(\gamma)}$, when seeing the vertices as elements in $[n]$. 
	For a vertex $a$, we define the following helpful set,
	consisting of those vertices which are followed by a vertex of $X_a$ on a path of ${\cal P}_i$
	(recall that $X_a$ denotes the set of vertices with a vacant edge to $a$).
	\begin{align*}
	X_a^\leftarrow&:=\{\gamma_{j-1} : \gamma \in {\cal P}_i, \gamma_j \in X_a\}\setminus\{a\}
	\end{align*}
	
	We now describe \CM's strategy $S_{HAM}$.
	\begin{description}
		\item[Stage 0] Build a perfect matching $M$ in $\frac{n}{2} +O(\ln n)$ moves using strategy $S_{PM}$. Set ${\cal P}_{\frac{n}{2}} = M$.
		\item[Stage 1] Let $\frac{n}{2}\geq i > k(n)$. To construct ${\cal P}_{i-1}$ 
		from ${\cal P}_i$ \CM uses a single move to occupy a vacant edge $e$ between two
		endpoints $a$ and $b$ that belong to two different paths 
		$\alpha$ and $\beta \in {\cal P}_i$.  
		He obtains ${\cal P}_{i-1}$ by removing $\alpha$ and $\beta$ from ${\cal P}_i$, 
		and adding the new path obtained by connecting $\alpha$ and $\beta$ with $e$. 
		(Again, with vacant we mean neither occupied by \RB nor used by \CM on the paths of ${\cal P}_i$; 
		if \CM has the edge previously occupied but is not using it, he just starts using it and occupies an arbitrary edge somewhere else.)
		\item[Stage 2] Let $k(n)\geq i > l$. 
		To construct ${\cal P}_{i-1}$ from ${\cal P}_i$ \CM uses a double move.
		Let us a fix the starting vertex $a:=\alpha_0$ of an arbitrary path $\alpha\in {\cal P}_i$. 
		Let 
		$B:= \{\beta_{s(\beta)}:\beta\in {\cal P}_i\setminus\{\alpha\}\}$ be the set of endpoints of 
		the paths in ${\cal P}_i$ other than $\alpha$.
		\CB  then identifies a vertex $v\in X_a^\leftarrow$ and a vertex $b\in B$ such that the edge $bv$ is currently vacant. Let $u \in X_a$ be the neighbor that follows $v$ on the path 
		$\gamma\in {\cal P}_i$ which contains $v$, say $u=\gamma_j$, 
		$v=\gamma_{j-1}$.  
		\CM now occupies the edges $au$ and $bv$ in his next two turns.
		The new family ${\cal P}_{i-1}$ depends on which of the following three cases hold.
		(Recall that $\alpha\neq \beta$). 
		\begin{description}
			\item[Case 1: $\gamma\neq \alpha,\beta$.] Then \CM obtains ${\cal P}_{i-1}$ by removing $\alpha$, $\beta$ and $\gamma$ from ${\cal P}_i$, and adding 
			\[\alpha_{s(\alpha)}\dots\alpha_0\gamma_j\gamma_{j+1}\dots\gamma_{s(\gamma)} 
			\mbox{ and } \gamma_0\gamma_1\dots\gamma_{j-1}\beta_{s(\beta)}\dots\beta_0.\]
			\item[Case 2: $\gamma= \alpha$.] I.e. $u=\alpha_j$, $v=\alpha_{j-1}$. Then \CM obtains ${\cal P}_{i-1}$ by removing $\alpha$ and $\beta$ from ${\cal P}_i$, and adding \[\alpha_{s(\alpha)}\dots\alpha_{j+1}\alpha_j\alpha_0\alpha_1\dots\alpha_{j-1}\beta_{s(\beta)}\dots\beta_0.\]
			\item[Case 3: $\gamma= \beta$.] I.e. $u=\beta_j$, $v=\beta_{j-1}$. Then \CM obtains ${\cal P}_{i-1}$ by removing $\alpha$ and $\beta$ from ${\cal P}_i$, and adding \[\alpha_{s(\alpha)}\dots\alpha_0\beta_j\beta_{j+1}\dots\beta_{s(\beta)}\beta_{j-1}\beta_{j-2}\dots\beta_0.\]
		\end{description}
		\item[Stage 3] Let $l\geq i > 1$. To construct ${\cal P}_{i-1}$ from ${\cal P}_i$ \CM uses a triple move. He first identifies two arbitrary paths 
		$\alpha,\beta\in {\cal P}_i$, $\alpha\neq\beta$. Let $a:=\alpha_0$, $b:=\beta_0$.
		Next, he sets $\gamma^{\tt{a}}\in {\cal P}_i$ to be a path such that 
		$|X_a\cap\gamma^{\tt{a}}|$ is maximal, and defines $\gamma^{\tt{b}}$ 
		similarly. Then he constructs vertex sets $X_a^*$ and $X_b^*$ depending on two cases:
		\begin{description}
			\item[Case 1: Neither $\gamma^{\tt{a}} =\gamma^{\tt{b}}=\alpha$ nor 
			$\gamma^{\tt{a}} =\gamma^{\tt{b}}=\beta$.] Then we simply define 
			$X_a^*:= X_a^\leftarrow\cap\gamma^{\tt{a}}$ and $X_b^*:= X_b^\leftarrow\cap\gamma^{\tt{b}}$.
			\item[Case 2: $\gamma^{\tt{a}}=\gamma^{\tt{b}}=\alpha$ 
			or $\gamma^{\tt{a}} =\gamma^{\tt{b}}=\beta$.]
			Let us assume $\gamma^{\tt{a}}=\gamma^{\tt{b}}=\alpha$, 
			the other case is treated similarly. First, we write 
			\begin{align*}
			X_a\cap \alpha &= \{\alpha_{e_0},\dots,\alpha_{e_q}\} \\
			X_b\cap \alpha &= \{\alpha_{f_0},\dots,\alpha_{f_r}\}
			\end{align*}
			Then, we define
			\begin{align*}
			X_a^*&:=\begin{cases} \{\alpha_{e_1-1},\dots,\alpha_{e_{\frac{q}{2}} -1}\} & \text{if $e_{\frac{q}{2}}<f_{\frac{r}{2}}$}\\
			\{\alpha_{e_{\frac{q}{2}}-1},\dots,\alpha_{e_q -1}\}& \text{if $e_{\frac{q}{2}}\geq f_{\frac{r}{2}}$}
			\end{cases}\\
			X_b^*&:=\begin{cases} \{\alpha_{f_{\frac{r}{2}}+1},\dots,\alpha_{f_{r-1} +1}\} & \text{if $e_{\frac{q}{2}}<f_{\frac{r}{2}}$}\\
			\{\alpha_{f_{1}-1},\dots,\alpha_{f_{\frac{r}{2}}-1}\}& \text{if $e_{\frac{q}{2}}\geq f_{\frac{r}{2}}$}
			\end{cases}\\
			\end{align*}
		\end{description}
		Now let $w\in X_a^*$ and $z\in X_b^*$ be such that $wz$ is a vacant edge. Then let $u\in X_a$ be 
		the neighbor following $w$ on $\gamma^{\tt{a}}$ and $v\in X_b$ be the neighbor
		following $z$ on $\gamma^{\tt{b}}$. In his next three moves \CM claims the edges $au$, 
		$bv$ and $wz$. He updates his paths by adding these three edges, and removing the edges 
		$uw$ and $vz$. 
		It is now easy to verify that this indeed reduces the number of paths in ${\cal P}_i$ by one in 
		each case (see Figure \ref{fig:casesStage3}).
		
		\begin{figure}[h!]
			\centering
			\includegraphics[width=0.3\textwidth]{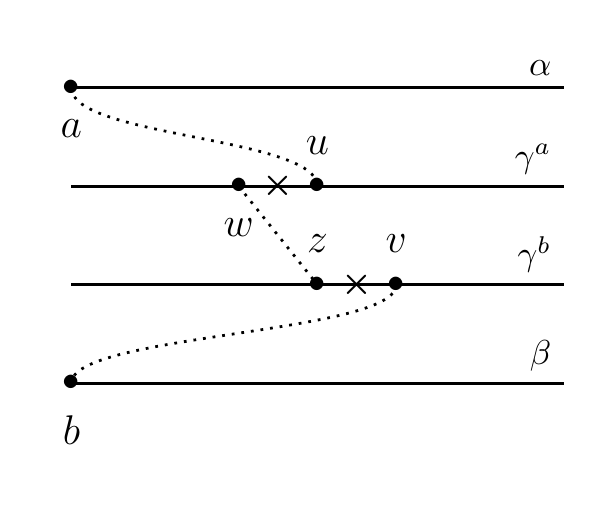}
			\includegraphics[width=0.3\textwidth]{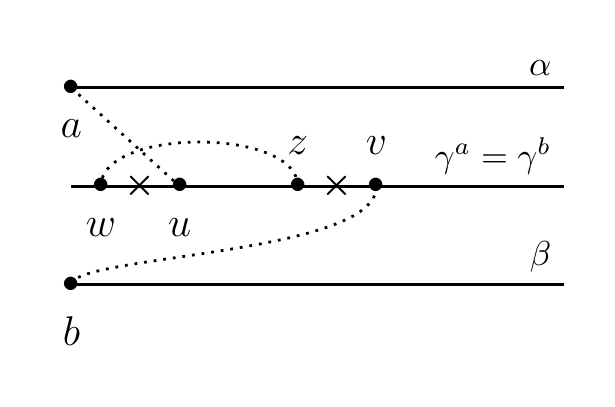}
			\includegraphics[width=0.3\textwidth]{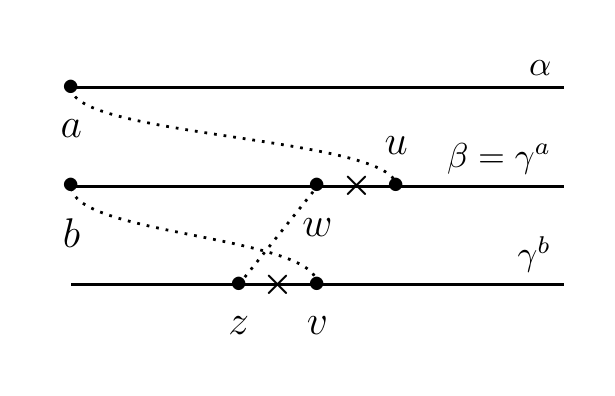}\\
			\includegraphics[width=0.3\textwidth]{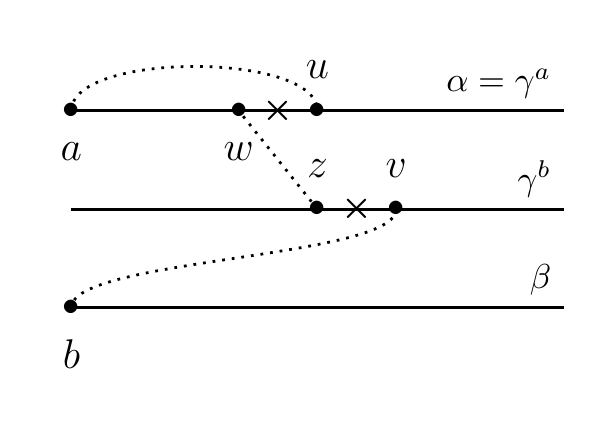}
			\includegraphics[width=0.3\textwidth]{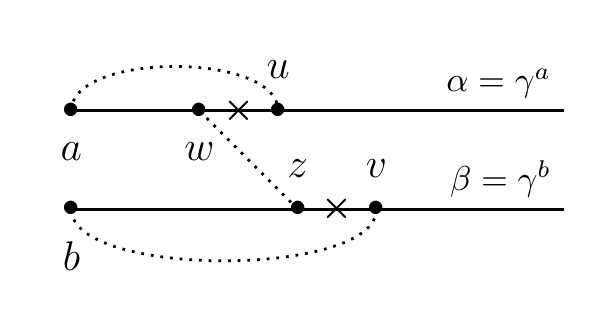}
			\includegraphics[width=0.3\textwidth]{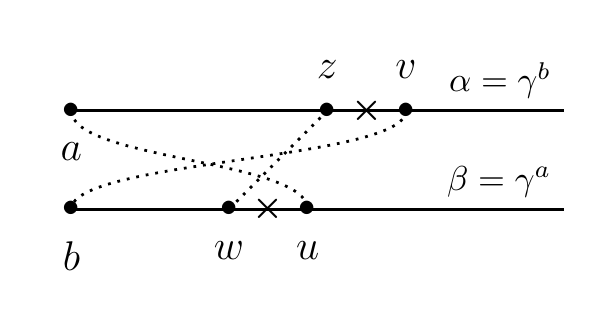}\\
			\includegraphics[width=0.45\textwidth]{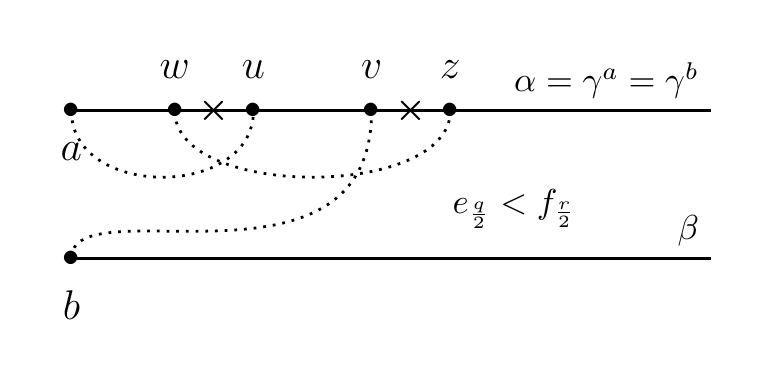}  
			\includegraphics[width=0.45\textwidth]{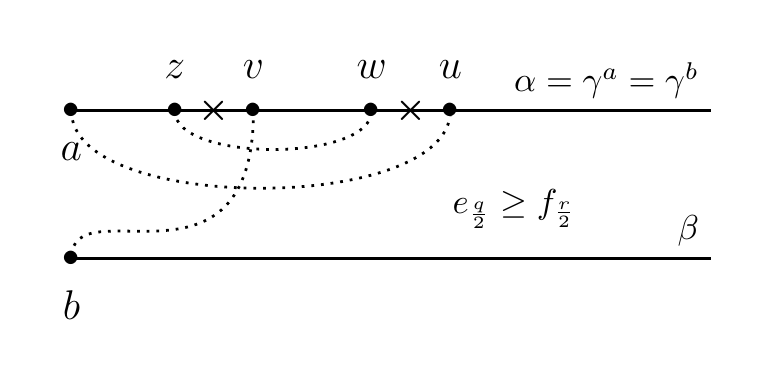}     
			\caption{The different sub-cases of Stage 3, with the two sub-cases of Case 2 on the bottom. Symmetric cases where the roles of $\alpha$ and $\beta$ are swapped are not pictured.}
			\label{fig:casesStage3}
		\end{figure}
		
		\item[Stage 4] Assume now there is only one path $\gamma_0\dots \gamma_n$ left that covers all vertices, and has endpoints $a=\gamma_0$ and $b=\gamma_n$. We can then write $X_a=\{\gamma_{i_0},\gamma_{i_1},\dots,\gamma_{i_s}\}$ and $X_b=\{\gamma_{j_0},\gamma_{j_1},\dots,\gamma_{j_t}\}$ with $i_x<i_{x+1}$ and $j_y<j_{y+1}$ for all $x,y$. Then, if $i_{\frac{s}{2}}<j_{\frac{t}{2}}$, we take $u=\gamma_i\in X_a$ and $v=\gamma_j\in X_b$ such that $i\leq\frac{s}{2}$, $j\geq \frac{t}{2}$, and the edge $\gamma_{i-1}\gamma_{j+1}$ is vacant. Then \CM claims the edges $au$, $\gamma_{i-1}\gamma_{j+1}$ and $bv$ to complete a Hamilton cycle (deleting the edges $u\gamma_{i-1}$ and $v\gamma_{j+1}$), see Figure \ref{fig:casesStage4}. If $i_{\frac{s}{2}}\geq j_{\frac{t}{2}}$, \CM instead chooses $i\geq i_{\frac{s}{2}}$, $j<j_{\frac{t}{2}}$ such that the edge $\gamma_{i+1}\gamma_{j-1}$ is vacant and proceeds accordingly.
		\begin{figure}[h!]
			\centering
			\begin{subfigure}[b]{0.45\textwidth}
				\trimbox{0cm 0.8cm 0cm 0cm}{ 
					\includegraphics[width=\textwidth]{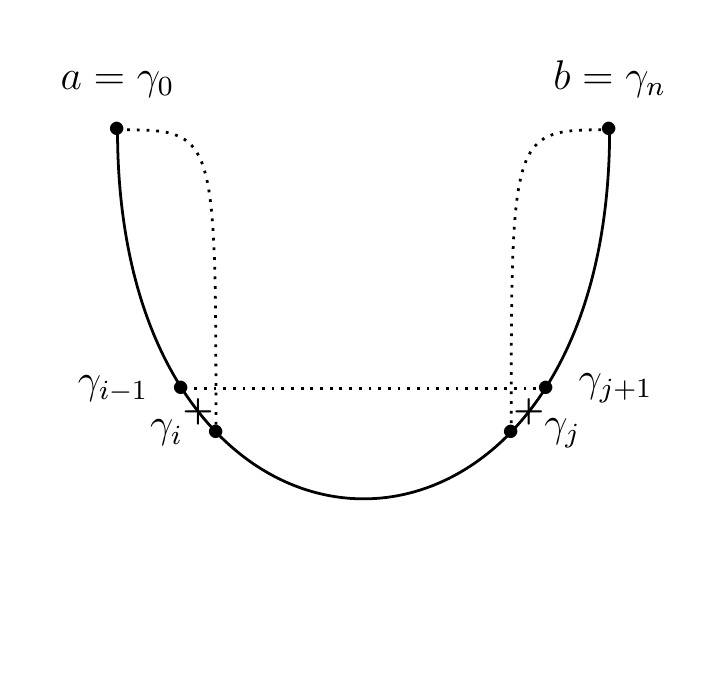}
				}
				\caption{Case $i_{\frac{s}{2}}< j_{\frac{t}{2}}$}
			\end{subfigure}
			\begin{subfigure}[b]{0.45\textwidth}
				\trimbox{0cm 0.8cm 0cm 0cm}{ 
					\includegraphics[width=\textwidth]{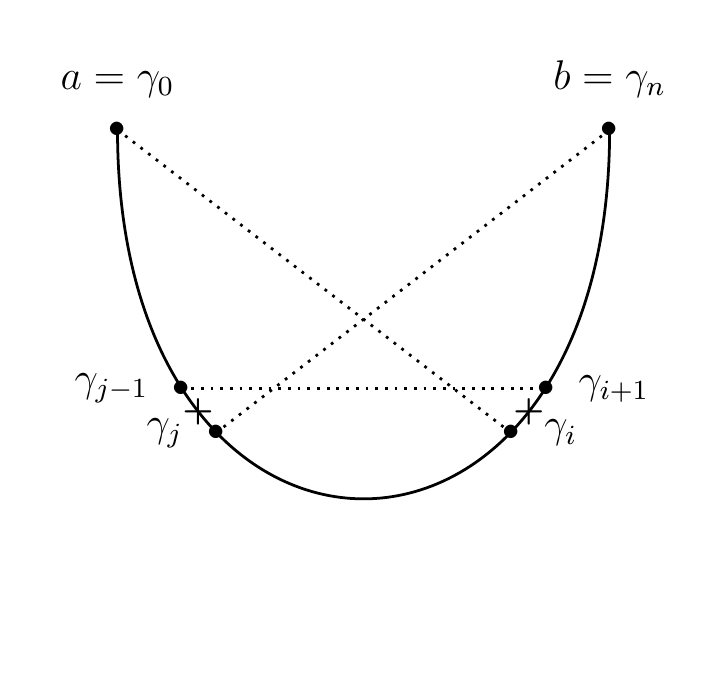}
				}
				\caption{Case $i_{\frac{s}{2}}\geq j_{\frac{t}{2}}$}
			\end{subfigure}
			\caption{Closing the Hamilton Cycle in Stage 4}
			\label{fig:casesStage4}
		\end{figure}
	\end{description}
	
	If \CM can not make a move according to these directions, he forfeits.
	
	\FloatBarrier
	
	\begin{thm}
		Let $\epsilon>0$ be fixed and let $b\leq (1-\epsilon)\frac{n}{2}$ be a positive integer.
		Then \CM, playing  against \RB in the $(1:b)$ half-random game, 
		can execute the strategy $S_{HAM}$ without forfeiting, a.a.s. Hence \CM wins 
		the $(1:b)$ half-random Hamiltonicity game  (and thus also the degree-2 game)
		within $\frac{n}{2}+O(\ln n)$ moves, a.a.s.
	\end{thm}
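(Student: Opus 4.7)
My overall approach is to bound the total number of rounds $t$ the strategy $S_{HAM}$ takes, use Proposition~\ref{prop:aux-randomgraph} and Lemma~\ref{lemGnm} to transfer random-graph pseudo-randomness properties to RandomBreaker's graph for the whole game, apply Lemma~\ref{lem:double-triple} to guarantee that all multi-moves complete, and then verify stage-by-stage that conditional on these events CleverMaker can always find the prescribed vacant edges.

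For the count, Stage~0 uses $\tfrac{n}{2}+O(\ln n)$ rounds by Theorem~\ref{thm:resPmCMRB}, Stage~1 uses $\tfrac{n}{2}-k(n)$ single rounds, Stage~2 uses $2(k(n)-l)$ rounds of double moves, and Stages~3 and~4 together use $3l$ rounds of triple moves, so $t=n+O(\ln n)$. Since $b\le(1-\epsilon)\tfrac{n}{2}$, we get $(b+1)t\le(1-\tfrac{\epsilon}{2})\binom{n}{2}=p\binom{n}{2}$ for $n$ large; hence by Proposition~\ref{prop:aux-randomgraph} and Lemma~\ref{lemGnm} the graph $G_{B,t'}$ a.a.s.\ satisfies properties $(i)$--$(iv)$ for all $t'\le t$, and by Lemma~\ref{lem:double-triple} the $O(\ln n)$ double moves and $O(1)$ triple moves all complete with failure probability $O\!\left(\tfrac{\ln n}{n}\right)$. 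I then condition on the intersection of these two a.a.s.\ events.

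Stages~1 and~2 closely parallel the matching argument. In Stage~1 there are $i>k(n)$ paths, so I can choose one endpoint from each of $k(n)$ different paths; property $(ii)$ forbids a near-clique on these vertices, producing a non-$G_B$ edge which is automatically vacant (since an edge between endpoints of two distinct paths is never a path edge). In Stage~2, property $(i)$ gives $|X_a|\ge\tfrac{\epsilon}{4}n-O(1)$, so $|X_a^{\leftarrow}|\ge\tfrac{\epsilon}{8}n$ after subtracting the at most $i=O(\ln n)$ starting vertices. Since $|B|\ge l$ and one can delete from $X_a^{\leftarrow}$ the at most $l$ vertices whose edge into $B$ would be a path edge, property $(iii)$ rules out an appropriate complete bipartite subgraph in $G_B$ and produces a vacant edge $bv$.

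Stages~3 and~4 form the substantive step. Since $i\le l$, pigeonhole on $X_a$ over the at most $l$ paths yields $|X_a\cap\gamma^{\tt{a}}|=\Omega(n/l)$, and similarly for $b$. In Case~1, the sets $X_a^*\subseteq\gamma^{\tt{a}}$ and $X_b^*\subseteq\gamma^{\tt{b}}$ are either on different paths or can be restricted to disjoint $\Omega(n)$-subsets, and property $(iv)$ then yields the required vacant edge $wz$ (deleting $O(1)$ would-be path edges as before). The main obstacle is Case~2, where both $X_a^*$ and $X_b^*$ are forced to live on the single path $\alpha$ (or $\beta$): here the explicit partition of the indices around the medians $e_{q/2}$ and $f_{r/2}$ is precisely what ensures $X_a^*$ and $X_b^*$ occupy disjoint halves of $\alpha$ while each retaining $\Omega(n)$ vertices, so property $(iv)$ applies. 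I then verify by inspection (cf.\ Figure~\ref{fig:casesStage3}) that in every subcase the triple move actually decreases $|\mathcal{P}_i|$ by one, and the closing argument in Stage~4 is an analogous application of property $(iv)$ to the two halves of the single remaining path; this completes the proof.
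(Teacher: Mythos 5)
Your proposal is correct and follows essentially the same route as the paper: the same round count $t=n+O(\ln n)$, the same conditioning on properties $(i)$--$(iv)$ of Lemma~\ref{lemGnm} via Proposition~\ref{prop:aux-randomgraph} together with completion of all multi-moves via Lemma~\ref{lem:double-triple}, and the same stage-by-stage verification using $X_a^\leftarrow$, $X_a^*$, $X_b^*$ and pigeonholing over the at most $l$ remaining paths. The only (inconsequential) slip is in your Stage~2 remark: the vertices of $X_a^\leftarrow$ whose edge into $B$ is a Maker path edge number at most $|B|\leq k(n)-1=O(\ln n)$, not at most $l$, but this does not affect the $\Omega(n)$ lower bound on $|X_a^\leftarrow|$.
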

	
	\begin{proof}[Proof of Theorem~\ref{thm:resHamCMRB}.] 
		Let $\epsilon>0$ be fixed and let $b\leq (1-\epsilon)\frac{n}{2}$ be a positive integer.
		We will prove that \CM, playing  against \RB in the $(1:b)$ half-random game, 
		can execute the strategy $S_{HAM}$ without forfeiting, a.a.s. This, in
		particular, will imply that \CM wins 
		the $(1:b)$ half-random Hamiltonicity game  (and thus also the
		degree-2 game and the connectivity game)
		within $\frac{n}{2}+O(\ln n)$ moves, a.a.s.
		
		First note that in Stage 0 
		\CM can a.a.s. create a perfect matching within $\frac{n}{2}+O(\ln n)$ rounds by 
		Theorem~\ref{thm:resPmCMRB}.  Stage $1.$-$4.$ take another 
		$$\frac{n}{2} -k(n) + 2(k(n) -l) + 3l + 3 =  \frac{n}{2}+O(\ln n)$$ rounds, for a 
		total of $t=n+ O(\ln n)$ rounds. 
		This means that throughout the game there are at most $(b+1)t\leq\left((1-\epsilon)\frac{n}{2}+1\right)\left(n+o\left(n\right)\right)\leq p{n\choose 2}$ occupied edges, and there are always at least $\frac{\epsilon}{4}n^2$ free edges, so both 
		Lemma~\ref{lemGnm} and  Lemma~\ref{lem:double-triple} are applicable. 
		
		As in the proof of Theorem~\ref{thm:resPmCMRB}, the number of double and triple moves is $O(k(n))= O(\ln n)$,
		so the overall probability of \CM forfeiting because he could not complete a double or triple move is
		$O(\ln n/n)$. Again we assume that \RB's graph has Properties $(i)$-$(iv)$ of Lemma~\ref{lemGnm}, and \CM can complete all his double and triple moves.
		Now we need to check that the vacant edges required by $S_{HAM}$ 
		for the single, double, and triple moves of \CM do exist each time. 
		
		\begin{description}
			\item[Stage 1:] If there was no vacant edge between two endpoints of different paths, then 
			\RB would have occupied a clique of size $k(n)$ minus a matching in his graph, spanned by the endpoints 
			of the paths in ${\cal P}_i$ (where the matching consists of the edges between the two 
			endpoints of each paths). However, by Property $(ii)$, this is impossible.
			\item[Stage 2:] By Property $(i)$, 
			$\left|X_a^\leftarrow\right|\geq \left|X_a\right| - |{\cal P}_i| -1 \geq n - 1 - deg_B(a) - k(n) -1 \geq 
			\frac{\epsilon}{8}n$.
			Furthermore, $B$ has one vertex from each path in  ${\cal P}_i\setminus\{\alpha\}$.
			This means $|B| \geq l$ since the number of paths in Stage 2 is at least $l+1$.
			By Property $(iii)$, there is no $\frac{\epsilon }{8}n\times l$ 
			complete bipartite graph in $G_{B,t}$, and
			hence \CM can start his double move.
			\item[Stage 3:] For \CM being able to identify its triple move there must only be a vacant edge between
			$X_a^*$ and $X_b^*$. Both sets have linear
			size: Indeed, both $X_a$ and $X_b$ have size at least $\frac{\epsilon n}{4}-1$ by Property $(i)$ of Lemma~\ref{lemGnm}, and since 
			there are only at most $l$ paths left in this stage, $X_a\cap\gamma^{\tt{a}}$ 
			and $X_b\cap\gamma^{\tt{b}}$ both must have at least $\left(\frac{\epsilon n}{4}-1\right)/l$ vertices. Furthermore, in all cases 
			$\left|X_a^*\right|\geq\frac{1}{2}\left|X_a\cap\gamma^{\tt{a}}\right|-1$ and 
			$\left|X_b^*\right|\geq\frac{1}{2}\left|X_b\cap\gamma^{\tt{b}}\right|-1$, 
			which means that $X_a^*$ and $X_b^*$ are of size at least
			$\frac{\epsilon n}{16l}$. In particular, there are disjoint sets
			$Y_a^*\subset X_a^*$ and $Y_b^*\subset X_b^*$ of size at least
			$\frac{\epsilon n}{32l}$ each. Then by Property $(iv)$, \RB could not have occupied all edges between $Y_a^*$ and $Y_b^*$, i.e. one must be vacant. Note that by definition of $X_a^*$ and $X_b^*$, no edge between the sets is used in a path in ${\cal P}_i$ (this corresponds to the edge $wz$ in Figure \ref{fig:casesStage3}).
			\item[Stage 4:] The analysis here is very similar to stage $3$.
		\end{description}

	\end{proof}
	
	\section{Conclusion and Open Problems}
	
	We found that in the \CM-\RB scenario the trivial upper bound on the threshold bias, provided by the size of a winning set, gives the true asymptotics. 
	It would be interesting to decide whether a stronger lower bound holds. For a $k$-uniform graph property 
	${\cal F} \subseteq 2^{E(K_n)}$ let $b_{triv} = \lfloor {n\choose 2}/k \rfloor -1$ be the largest bias $b$ such that 
	Maker occupies at least $k$ edges in the $(1:b)$ game. Is it true that already 
	for \RB -bias  $b_{triv} - \omega(1)$, where $\omega(1)$ is a function  
	tending to infinity arbitrarily slowly, $\CM$ has a strategy that is winning against 
	\RM a.a.s.? 
	
	A possible first step in this direction could be to give a strategy for $\CM$ for every 
	$\epsilon > 0$ that a.a.s occupies a winning set $F\in {\cal F}$ in exactly
	$|F|$ moves against a \RB bias of $(1- \epsilon) b_{triv}$.
	We are not that far away from this: our strategies for \CM in the perfect matching and Hamiltonicity game	use only $O(\log n)$ more moves than necessary.

	\bibliography{mainLiterature}{}
	\bibliographystyle{plain}
	
\end{document}